\newcounter{num}[section]
\newenvironment{theorem}
{\refstepcounter{num}%
\bigskip\noindent\nopagebreak[4]{\bf Theorem~\arabic{section}.\arabic{num}. }\it}
\newenvironment{corollary}
{\refstepcounter{num}%
\bigskip\noindent\nopagebreak[4]{\bf Corollary~\arabic{section}.\arabic{num}. }\it}
\newenvironment{lemma}
{\refstepcounter{num}%
\bigskip\noindent\nopagebreak[4]{\bf Lemma~\arabic{section}.\arabic{num}. }\it}
\newenvironment{remark}
{\refstepcounter{num}%
\bigskip\noindent\nopagebreak[4]{\bf Remark~\arabic{section}.\arabic{num}. }}
\newcommand{\N}{{\mathbb{N}}}
\newcommand{\LL}{{\mathcal{L}}}
\newcommand{\Ss}{{\mathbf{S}}}
\newcommand{\V}{{\mathrm{V}}}
\newcommand{\pr}{{\prime}}
\newcommand{\al}{{\alpha}}
\newcommand{\A}{{\mathcal{A}}}
\newcommand{\Bcal}{{\mathcal{B}}}
\renewcommand{\P}{{\mathbf{P}}}
\renewcommand{\a}{{\mathbf{a}}}
\renewcommand{\c}{{\mathbf{c}}}
\renewcommand{\d}{{\mathbf{d}}}
\renewcommand{\Pr}{{\mathrm{Pr}}}
\begin{document}

\title{Equations over direct powers of algebraic structures in relational languages}
\author{Artem N. Shevlyakov}
\maketitle
\abstract{We study equations over relational structures that approximate groups and semigroups. For such structures we proved the criteria, when a direct power of such algebraic structures is equationally Noetherian.}

\section{Introduction}

Let $\A$ be an algebraic structure with the universe $A$ of a functional language $\LL$. In other words, there are defined functions and constants over $\A$ that correspond to the symbols of $\LL$. One can define the structure $\Pr(\A)$ with the universe $A$ of a pure relational language $\LL_{pred}$ as follows: 
\begin{eqnarray}
R_f(x_1,\ldots,x_n,y)=\{(x_1,\ldots,x_n,y)\mid f(x_1,\ldots,x_n)=y\}\mbox{ in $\Pr(\A)$},\\
R_c(x)=\{x\mid x=c\}\mbox{ in $\Pr(\A)$},
\end{eqnarray} 
where functional and constant symbols $f,c$ belong to the language $\LL$.
Namely, the relation $R_f\in\LL_{pred}$ ($R_c\in\LL_{pred}$) is the graph of the function $f\in \LL$ (constant $c\in\LL$). 

The $\LL_{pred}$-structure $\Pr(\A)$ is called the {\it predicatization} of an $\LL$-structure $\A$. In particular, if $\A$ is a group of the language $\LL_g=\{\cdot,^{-1},1\}$ then $\Pr(\A)$ is an algebraic structure of the language $\LL_{g-pred}$ with~(\ref{eq:realtions_for_groups1},\ref{eq:realtions_for_groups3}). Notice that any equation over a group $\A$ may be rewritten in the language $\LL_{g-pred}$ by the introducing new variables. For example, the equation $x^{-1}y^{-1}xy=1$ has the following correspondence in the relational language $\LL_{g-pred}$:
\[
\Pr(\Ss)=\begin{cases}
I(x,x_1),\\
I(y,y_1),\\
M(x_1,y_1,z_1),\\
M(z_1,x,z_2),\\
M(z_2,y,z_3),\\
E(z_3)
\end{cases}
\]   
where the relations $I,M,E$ are defined by~(\ref{eq:realtions_for_groups1},\ref{eq:realtions_for_groups3}).

It is easy to see that the projection of the solution set of $\Ss$ onto the variables $x,y$ gives the solution set of the initial equation $x^{-1}y^{-1}xy=1$. More generally, {\it for any finite set of group equations $\Ss$ in variables $X$ there exists a system $\Pr(\Ss)$ of equations in the language $\LL_{g-pred}$ such that the solution set of $\Ss$ is the projection of the solution set $\Pr(\Ss)$ onto the variables $X$.} Hence, there arises the following important problem.

\medskip

\noindent {\bf Problem}. What properties of a finite system $\Ss$ ($\Pr(\Ss)$) are determined by the system $\Pr(\Ss)$ (respectively, $\Ss$)?

\medskip

This problem was originally studied in~\cite{shevl_herald_predicatiz}, where it was proved the general results for relational systems $\Pr(\Ss)$. 

The next principal problem is to describe relational structures $\Pr(\A)$ with ``hard'' and ``simple'' equational properties. According to~\cite{DMR1}, an algebraic structure $\Pr(\A)$ has ``simple'' equational properties if $\Pr(\A)$ is equationally Noetherian (i.e. any system of equations is equivalent over $\Pr(\A)$ to a finite subsystem). However, it was proved in~\cite{shevl_herald_predicatiz} that any algebraic structure of a {\it finite} relational language is equationally Noetherian. Thus, the Noetherian property gives a trivial classification of ``hard'' and ``simple'' relational structures $\Pr(\A)$. 

Therefore, we have to propose an alternative approach in the division of relational algebraic structures into classes with ``simple'' and ``hard'' equational properties. Our approach satisfies the following:
\begin{enumerate}
\item we deal with lattices of algebraic sets over a given algebraic structures (a set $Y$ is algebraic over a predicatization $\Pr(\A)$ if $Y$ is a solution set of an appropriate system of equations);
\item we use the common operations of model theory (direct products, substructures, ultra-products etc.); 
\item the partition into ``simple'' and ``hard'' algebraic structures is implemented by a list of first-order formulas $\Phi$ such that 
\begin{equation}
\A\mbox{ is ``simple'' }\Leftrightarrow \A\mbox{ satisfies }\Phi.
\label{eq:simple_is_Phi}
\end{equation}
In other words, the ``simple'' class of algebraic structures is axiomatizable by formulas $\Phi$. 
\end{enumerate}

Namely, we offer to consider infinite direct powers $\Pi \Pr(\A)$ of a predicatization $\Pr(\A)$ and study Diophantine equations over $\Pi\Pr(\A)$ instead of Diophantine equations over $\Pr(\A)$ (an equation $E(X)$ is said to be Diophantine over an algebraic structure $\Bcal$ if $E(X)$ may contain the occurrences of any element of $\Bcal$). The decision rule in our approach is the following:
\begin{equation}
\Pr(\A)\mbox{ is ``simple'' }\Leftrightarrow \mbox{all direct powers of }\Pr(\A)\mbox{ are equationally Noetherian };
\label{eq:our_decision_rule}
\end{equation}
otherwise, an algebraic structure $\Pr(\A)$ is said to be ``hard''.   

Some results of the type~(\ref{eq:simple_is_Phi}) and~(\ref{eq:our_decision_rule}) were obtained in~\cite{shevl_shah}, where we found formulas $\Phi$ for the classes of groups, rings and monoids in functional languages. For example, a group (ring) has a ``simple'' equational theory in the functional language iff it is abelian (respectively, with zero multiplication). 

On the other hand, we prove below that any group in the language $\LL_{g-pred}$ has equationally Noetherian direct powers (Corollary~\ref{cor:for_groups}). Moreover, the similar result holds for the natural generalizations of groups: quasi-groups and loops (Remark~\ref{rem:loops}). 

However, the class of semigroups has a nontrivial classification~(\ref{eq:our_decision_rule}). We find two quasi-identities~(\ref{eq:semigroup_QI1},\ref{eq:semigroup_QI2}) such that a semigroup $S$ satisfies~(\ref{eq:semigroup_QI1},\ref{eq:semigroup_QI2}) iff any direct power of $\Pr(S)$ is equationally Noetherian (Theorem~\ref{th:pred_semigroup_N}).

In the class of finite semigroups the conditions~(\ref{eq:semigroup_QI1},\ref{eq:semigroup_QI2}) imply that the minimal ideal (kernel) of a semigroup $S$ is a rectangular band of groups, and the kernel coincides with the ideal of reducible elements of $S$ (Theorem~\ref{th:semigroup_description}). However, if the kernel of a finite semigroup $S$ is a group then the conditions of Theorem~\ref{th:pred_semigroup_N} become sufficient for Noetherian property of any direct power $\Pi\Pr(S)$.  


\section{Basic notions}

In the current paper we deal with relational languages that interpret functions and constants in groups and semigroups.

Let $S$ be a semigroup. One can define the language $\LL_{s-pred}=\{M^{(3)}\}$ and a relation 
\begin{equation*}
M(x,y,z)\Leftrightarrow xy=z.
\label{eq:relation_for_semigroups}
\end{equation*}

Any group $G$ may be considered as an algebraic structure of the relational language $\LL_{g-pred}=\{M^{(3)},I^{(2)},E^{(1)}\}$, where
\begin{eqnarray}
\label{eq:realtions_for_groups1}
M(x,y,z)\Leftrightarrow xy=z,\\
I(x,y)\Leftrightarrow x=y^{-1},\\
E(x)\Leftrightarrow x=1.
\label{eq:realtions_for_groups3}
\end{eqnarray}

An algebraic structure of the language $\LL_{s-pred}$ ($\LL_{g-pred}$) is called the {\it predicatization} of a semigroup $S$ (group $G$) if the operations over $S$ ($G$) corresponds to the relations~(\ref{eq:realtions_for_groups1},\ref{eq:realtions_for_groups3}). The predicatization of a semigroup $S$ (group $G$) is denoted by $\Pr(S)$ (respectively, $\Pr(G)$).

Following~\cite{DMR2}, we give the main definitions of algebraic geometry over algebraic structures (below $\LL\in\{\LL_{s-pred},\LL_{p-pred}\}$). 

An {\it equation over  $\LL$ ($\LL$-equation)} is an atomic formula over $\LL$.
The examples of equations are the following: $M(x,x,x)$, $M(x,y,x)$ ($\LL_{s-pred}$-equations); $M(x,x,y)$, $I(x,y)$, $I(x,x)$, $E(x)$ ($\LL_{g-pred}$-equations).

A {\it system of $\LL$-equations} ({\it $\LL$-system} for shortness) is an arbitrary set of $\LL$-equations. { Notice that we will consider only systems in a finite set of variables $X=\{x_1,x_2,\ldots,x_n\}$}. The set of all solutions of $\Ss$ in an $\LL$-structure $\A$ is denoted by $\V_\A(\Ss)\subseteq \A^n$. A set $Y\subseteq \A^n$ is said to be an {\it algebraic set over $\A$} if there exists an $\LL$-system $\Ss$ with $Y=\V_\A(\Ss)$. If the solution set of an $\LL$-system $\Ss$ is empty, $\Ss$ is said to be {\it inconsistent}. Two $\LL$-systems $\Ss_1,\Ss_2$ are called {\it equivalent over an $\LL$-structure $\A$} if $\V_\A(\Ss_1)=\V_\A(\Ss_2)$.

An $\LL$-structure $\A$ is {\it $\LL$-equationally Noetherian} if any infinite $\LL$-system $\Ss$ is equivalent over $\A$ to a finite subsystem $\Ss^\pr\subseteq \Ss$. 

Let $\A$ be an $\LL$-structure. By $\LL(\A)$ we denote the language $\LL\cup\{a\mid a\in \A\}$ extended by new constants symbols which correspond to elements of $\A$. The language extension allows us to use constants in equations. The examples of equations in the extended languages are the following: $M(x,y,a)$ ($\LL_{s-pred}(S)$-equation and $a\in S$); $M(a,x,b)$, $I(x,a)$, $E(a)$ ($\LL_{g-pred}(S)$-equations and $a,b\in G$). Obviously, the class of $\LL(\A)$-equations is wider than the class of $\LL$-equations, so an $\LL$-equationally Noetherian algebraic structure $\A$ may lose this property in the language $\LL(\A)$.

Since the algebraic structures $\A$ and $\Pr(\A)$ have the same universe, we will write below $\V_\A(\Ss)$  ($\LL(\A)$) instead of $\V_{\Pr(\A)}(\Ss)$ (respectively, $\LL(\Pr(\A))$).

Let $\A$ be a relational $\LL$-structure. The {\it direct power} $\Pi \A=\prod_{i\in I}\A$ of $\A$ is the set of all sequences $[a_i\mid i\in I]$ and any relation $R\in\LL$ is defined as follows
\[
R([a^{(1)}_i\mid i\in I],[a^{(2)}_i\mid i\in I],\ldots,[a^{(n)}_i\mid i\in I])\Leftrightarrow 
R(a^{(1)}_i,a^{(2)}_i,\ldots,a^{(n)}_i) \mbox{ for each }i\in I.
\]
A map $\pi_k\colon\Pi\A\to \A$ is called the {\it projection onto the $i$-th coordinate} if $\pi_k([a_i\mid i\in I])=a_k$.

Let $E(X)$ be an $\LL(\Pi\A)$-equation over a direct power $\Pi\A$.  We may rewrite $E(X)$ in the form $E(X,\overrightarrow{\mathbf{C}})$, where $\overrightarrow{\mathbf{C}}$ is an array of constants occurring in the equation $E(X)$. One can introduce the {\it projection of an equation} onto the $i$-th coordinate as follows:
\[
\pi_i(E(X))=\pi_i(E(X,\overrightarrow{\mathbf{C}}))=E(X,\pi_i(\overrightarrow{\mathbf{C}})),
\] 
where $\pi_i(\overrightarrow{\mathbf{C}})$ is an array of the $i$-th coordinates of the elements from $\overrightarrow{\mathbf{C}}$. For example, the $\LL_{s-pred}(\Pi\A)$-equation $M(x,[a_1,a_2,a_3,\ldots],[b_1,b_2,b_3,\ldots])$ has the following projections
\begin{eqnarray*}
M(x,a_1,b_1),\\
M(x,a_2,b_2),\\
M(x,a_3,b_3),\\
\ldots
\end{eqnarray*}
Obviously, any projection of an $\LL(\Pi\A)$-equation is an $\LL(\A)$-equation.

Let us take an $\LL(\Pi\A)$-system $\Ss=\{E_j(X)\mid j\in J\}$. The $i$-th projection of $\Ss$ is the $\LL(\A)$-system defined by $\pi_i(\Ss)=\{\pi_i(E_j(X))\mid j\in J\}$. The projections of an $\LL(\Pi\A)$-system $\Ss$ allow to describe the solution set of $\Ss$ by
\begin{equation}
\V_{\Pi\A}(\Ss)=\{[P_i\mid i\in I]\mid P_i\in \V_\A(\pi_i(\Ss))\}.
\label{eq:solution_set_via_projections}
\end{equation}
In particular, if one of the projections $\pi_i(\Ss)$ is inconsistent, so is $\Ss$.

The following statement immediately follows from the description~(\ref{eq:solution_set_via_projections}) of the solution set over a direct powers.

\begin{lemma}
Let $\Ss=\{E_j(X)\mid j\in J\}$ be an $\LL(\Pi\A)$-system over $\Pi\A$. If one of the projections $\pi_i(\Ss)$ is inconsistent, so is $\Ss$. Moreover, if $\A$ is $\LL$-equationally Noetherian, then an inconsistent $\LL(\Pi\A)$-system $\Ss$ is equivalent to a finite subsystem.
\label{l:inconsistent_systems}
\end{lemma}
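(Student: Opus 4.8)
The plan is to read everything off the product description~(\ref{eq:solution_set_via_projections}) of the solution set and then to apply the equationally Noetherian hypothesis to a single projection. First I would dispose of the first assertion, which is immediate: by~(\ref{eq:solution_set_via_projections}) a tuple $[P_i\mid i\in I]$ lies in $\V_{\Pi\A}(\Ss)$ exactly when $P_i\in\V_\A(\pi_i(\Ss))$ for every $i$, so $\V_{\Pi\A}(\Ss)$ is literally the Cartesian product $\prod_{i\in I}\V_\A(\pi_i(\Ss))$. If one factor $\V_\A(\pi_i(\Ss))$ is empty, the product is empty, i.e.\ $\Ss$ is inconsistent.

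For the ``moreover'' part I would run the same product description in the converse direction. Assume $\Ss$ is inconsistent, so $\prod_{i\in I}\V_\A(\pi_i(\Ss))=\emptyset$. A Cartesian product can be empty only if one of its factors is empty, since otherwise choosing one point from each nonempty factor would produce a solution of $\Ss$; hence there is an index $i_0$ with $\V_\A(\pi_{i_0}(\Ss))=\emptyset$, that is, the projection $\pi_{i_0}(\Ss)$ is an inconsistent system over $\A$. Now I would invoke the Noetherian hypothesis on this projection: being inconsistent over the equationally Noetherian structure $\A$, the system $\pi_{i_0}(\Ss)$ is equivalent to a finite subsystem, and that finite subsystem is again inconsistent. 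Each of its (finitely many) equations has the form $\pi_{i_0}(E_j)$ for some $j\in J$; collecting the corresponding equations $E_j$ yields a finite $\Ss_0\subseteq\Ss$ whose projection $\pi_{i_0}(\Ss_0)$ contains an inconsistent subsystem and is therefore itself inconsistent. By the first assertion $\Ss_0$ is then inconsistent, so $\V_{\Pi\A}(\Ss_0)=\emptyset=\V_{\Pi\A}(\Ss)$ and $\Ss$ is equivalent to the finite subsystem $\Ss_0$.

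The step I expect to require the most care is the application of the Noetherian property to $\pi_{i_0}(\Ss)$. The subtlety is that a projection of an $\LL(\Pi\A)$-equation is an $\LL(\A)$-equation, so $\pi_{i_0}(\Ss)$ carries constants from $\A$; to extract a finite inconsistent subsystem one must therefore use that $\A$ is equationally Noetherian for systems that may involve such constants, i.e.\ in the enriched language $\LL(\A)$, and not merely in the pure language $\LL$, the two notions being genuinely different as the text already notes. Once it is agreed that the Noetherian property is being applied in the constant-enriched language of the projection, the remainder is routine bookkeeping of pulling the finite subsystem back along $\pi_{i_0}$. A minor point I would state explicitly is that the ``empty product implies an empty factor'' argument uses a choice of one solution per coordinate, so for an infinite index set $I$ it is an appeal to the axiom of choice.
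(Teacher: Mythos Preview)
Your argument is correct and follows the same route as the paper: use the product description~(\ref{eq:solution_set_via_projections}), pass to an inconsistent projection, apply the Noetherian hypothesis there, and pull the resulting finite subsystem back to $\Ss$. Your write-up is in fact more complete than the paper's, which simply begins the second part with ``and $\pi_i(\Ss)$ is inconsistent'' without arguing (as you do, via the empty-product/empty-factor step) that inconsistency of $\Ss$ forces some projection to be inconsistent.

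Your caveat about the language is also well taken and is a genuine imprecision in the paper's formulation: the projection $\pi_{i_0}(\Ss)$ is an $\LL(\A)$-system, so extracting a finite inconsistent subsystem requires the Noetherian property in the constant-enriched language $\LL(\A)$, not merely in $\LL$; the paper states the hypothesis as ``$\LL$-equationally Noetherian'' but uses it exactly as you describe.
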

\begin{proof}
The first assertion directly follows from~(\ref{eq:solution_set_via_projections}). Suppose $\A$ is $\LL$-equationally Noetherian, and $\pi_i(\Ss)$ is inconsistent. Hence, $\pi_i(\Ss)$ is equivalent to its finite inconsistent subsystem $\{\pi_i(E_j(X))\mid j\in J^\pr\}$, $|J^\pr|<\infty$, and the finite subsystem $\Ss^\pr=\{E_j(X)\mid j\in J^\pr\}\subseteq\Ss$ is also inconsistent.
\end{proof}

\section{Predicatization of semigroups and groups}

\begin{theorem}
Let $\Pr(S)$ be the predicatization of a semigroup $S$. A direct power of $\Pr(S)$ is equationally Noetherian iff the following quasi-identities 
\label{th:pred_semigroup_N}
\begin{eqnarray}
\label{eq:semigroup_QI1}
\forall a\forall b\forall\alpha\forall\beta \left((a\alpha=a\beta)\to(b\al=b\beta)\right),\\
\label{eq:semigroup_QI2}
\forall a\forall b\forall\alpha\forall\beta \left((\alpha a=\beta a)\to(\al b=\beta b)\right)
\end{eqnarray} 
hold in $S$.
\end{theorem}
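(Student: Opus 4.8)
The plan is to reduce the statement to a claim about a single semigroup and then argue the two directions separately. First note that predicatization commutes with direct powers, $\Pi\Pr(S)=\Pr(\Pi S)$, because the relation $M$ is defined coordinatewise; and that (\ref{eq:semigroup_QI1}),(\ref{eq:semigroup_QI2}) are quasi-identities, hence preserved under direct products, while the diagonal embedding $S\hookrightarrow\Pi S$ gives the converse, so $S$ satisfies them iff every power $\Pi S$ does. The theorem then follows from the general claim that $\Pr(T)$ is $\LL$-equationally Noetherian whenever the semigroup $T\models$(\ref{eq:semigroup_QI1}),(\ref{eq:semigroup_QI2}) (applied to $T=\Pi S$), together with an explicit non-Noetherian power when one of the quasi-identities fails.

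For the direction ``quasi-identities $\Rightarrow$ Noetherian'' I would work directly inside $T$ by a finite case analysis on the shape of an atomic equation $M(t_1,t_2,t_3)$, $t_k\in\{x_1,\dots,x_n\}\cup T$, according to which slots carry constants. The quasi-identities make the left- and right-kernel relations $u\equiv_L v\Leftrightarrow wu=wv$ and $u\equiv_R v\Leftrightarrow uw=vw$ well defined (``for some $w$'' being equivalent to ``for all $w$'' precisely by (\ref{eq:semigroup_QI1}),(\ref{eq:semigroup_QI2})) and congruences on $T$. The structural facts I would use are: under (\ref{eq:semigroup_QI1}) the solution set of $cx=e$ is a full $\equiv_L$-class (or empty) and $xd=xd'$ forces $d\equiv_L d'$; dually under (\ref{eq:semigroup_QI2}) the solution set of $xd=e$ is a full $\equiv_R$-class and $cx=c'x$ forces $c\equiv_R c'$. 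With these, for each fixed assignment of variables to slots the infinitely many equations of that shape either contain a two-element inconsistent subsystem or are all equivalent to a single one of them: the shapes $cx=e$ and $xd=x'$ collapse via $\equiv_L$, the shapes $xd=e$ and $cx=x'$ collapse via $\equiv_R$, the constant-output shapes $x_px_q=d$ and $cd=x_r$ admit at most one consistent instance per (pair of) variable(s), the variable-only shape $x_px_q=x_r$ has at most $n^3$ instances, and a constant-only equation $cd=e$ is vacuous or a one-element inconsistent subsystem. As there are finitely many shapes and variable-assignments, the retained equations form a finite subsystem equivalent to $\Ss$.

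For ``not quasi-identities $\Rightarrow$ some power not Noetherian'', assume (\ref{eq:semigroup_QI1}) fails, with $a\al=a\beta=:\gamma$ but $b\al\ne b\beta$ (failure of (\ref{eq:semigroup_QI2}) being symmetric, with right multipliers). Over $S^{\N}$ in one variable $x$ I would take $\Ss=\{E_n:n\in\N\}$, $E_n=M(\mathbf{c}^{(n)},x,\mathbf{e}^{(n)})$, whose $i$-th coordinate is the ``tight'' constraint $bx=b\al$ when $i=n$ and the ``loose'' constraint $ax=\gamma$ when $i\ne n$. By the projection description~(\ref{eq:solution_set_via_projections}) each $\pi_i(\Ss)$ is equivalent to $\{ax=\gamma,\ bx=b\al\}$, whose solution set contains $\al$ but not $\beta$; whereas for a finite subsystem $\Ss'$ and any coordinate $i$ outside its finite index set, $\pi_i(\Ss')$ reduces to $\{ax=\gamma\}$, which still admits $\beta$. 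Hence $\V_{S^{\N}}(\Ss')$ strictly contains $\V_{S^{\N}}(\Ss)$ (the point equal to $\al$ everywhere and $\beta$ at one such coordinate lies in the difference), so no finite subsystem is equivalent to $\Ss$.

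The main obstacle is the backward direction, and within it the exhaustive, correct matching of each equation shape to the quasi-identity that governs it — in particular recognizing that the shape $cx=x'$ with variable output and varying \emph{left} constant is controlled by (\ref{eq:semigroup_QI2}) through $c\equiv_R c'$, not by (\ref{eq:semigroup_QI1}). Forcing every shape to collapse to finitely many representatives uniformly in the constants, which may range over an infinite $T$, is exactly what makes (\ref{eq:semigroup_QI1}),(\ref{eq:semigroup_QI2}) both necessary and sufficient; once the $\equiv_L,\equiv_R$-class facts are in place the remaining bookkeeping is routine, and inconsistent systems are handled along the way by producing a finite inconsistent subsystem, in line with Lemma~\ref{l:inconsistent_systems}.
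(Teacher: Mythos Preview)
Your proposal is correct and follows essentially the same approach as the paper: a finite case analysis on the shape of $M(t_1,t_2,t_3)$ with respect to which arguments are constants, collapsing each shape to a single equation via the quasi-identities, and an explicit non-Noetherian system over $S^{\N}$ when one quasi-identity fails. Your presentation is slightly cleaner in two respects --- you first reduce to an arbitrary semigroup $T\models(\ref{eq:semigroup_QI1}),(\ref{eq:semigroup_QI2})$ via $\Pi\Pr(S)=\Pr(\Pi S)$ and preservation of quasi-identities, and you package the case analysis through the congruences $\equiv_L,\equiv_R$ --- and your counterexample places the ``tight'' constraint $bx=b\al$ at a single coordinate rather than on a tail, but the arguments are otherwise the same (note only that you should also mention the equality atoms $x_i=x_j$, $x_i=\c$, $\c=\d$, which the paper puts in $\Ss_0$ and which are handled trivially).
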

\begin{proof}
First, we prove the ``if'' part of the theorem. Suppose $S$ satisfies~(\ref{eq:semigroup_QI1},\ref{eq:semigroup_QI2}) and consider an infinite  $\LL_{s-pred}(\Pi S)$-system $\Ss$. One can represent $\Ss$ as a finite union of the following systems
\begin{equation}
\label{eq:decomposition_semigroups}
\Ss=\bigcup_{1\leq i,j\leq n}\Ss_{cij}\bigcup_{1\leq i,j\leq n}\Ss_{icj}
\bigcup_{1\leq i,j\leq n}\Ss_{ijc}
\bigcup_{1\leq i\leq n}\Ss_{cci}\bigcup_{1\leq i\leq n}\Ss_{cic}\bigcup_{1\leq i\leq n}\Ss_{icc}\bigcup\Ss_0,
\end{equation}
where each equation of $\Ss_0$ is one of the following types:
\begin{enumerate}
\item $x_i=x_j$; 
\item $x_i=\c_j$;
\item $\c_i=\c_j$;
\item $M(x_i,x_j,x_k)$;
\end{enumerate}
and $\Ss_{cij}=\{M(\c_k,x_i,x_j)\mid k\in K\}$, $\Ss_{icj}=\{M(x_i,\c_k,x_j)\mid k\in K\}$, $\Ss_{ijc}=\{M(x_i,x_j,\c_k)\mid k\in K\}$, $\Ss_{cci}=\{M(\c_k,\d_k,x_i)\mid k\in K\}$, $\Ss_{cic}=\{M(\c_k,x_i,\d_k)\mid k\in K\}$, $\Ss_{icc}=\{M(x_i,\c_k,\d_k)\mid k\in K\}$ ($\c_k,\d_k\in\Pi\Pr(S)$), where each system above has its own index set $K$.

Clearly, the system $\Ss_0$ is equivalent to its finite subsystem. So it is sufficient to prove that each of other systems is equivalent to a finite subsystem over $\Pi S$. According to Lemma~\ref{l:inconsistent_systems}, we may assume that any system below is consistent.

Thus, we have the following cases.
\begin{enumerate}
\item Let $\Ss_{icc}=\{M(x_i,\c_k,\d_k)\mid i\in I\}$ and $M(x_i,\c_1,\d_1)$ be an arbitrary equation of $\Ss_{icc}$. Since $\Ss_{icc}$ is consistent then one can choose $\bar{\al}\in\V_{\Pi S}(\Ss_{icc})$, $\bar{\beta}\in\V_{\Pi Ss}(M(x_i,\c_1,\d_1))$. We have $\bar{\al}\c_1=\bar{\beta}\c_1=\d_1$. By the quasi-identities~(\ref{eq:semigroup_QI1},\ref{eq:semigroup_QI2}), $\bar{\al}\c_k=\bar{\beta}\c_k$ for any $\c_k$. Hence, $\bar{\beta}$ satisfies all equations from $\Ss_{icc}$, and $\Ss_{icc}$ is equivalent to the equation $M(x_i,\c_1,\d_1)$. The proof for the systems $\Ss_{cic},\Ss_{cci}$ is similar.


\item Let $\Ss_{icj}=\{M(x_i,\c_k,x_j)\mid i\in I\}$ (the proof for $\Ss_{cij},\Ss_{ijc}$ is similar). Since $\Ss_{icj}$ is consistent, there exist a point $(\bar{\al},\bar{\beta})\in\V_{\Pi S}(\Ss_{icj})$ and the equalities $\bar{\al}\c_k=\bar{\al}\c_l=\bar{\beta}$ hold for any $k,l\in K$. By~(\ref{eq:semigroup_QI1},\ref{eq:semigroup_QI2}), for any $\bar{\gamma}\in\Pi S$ it holds $\bar{\gamma}\c_k=\bar{\gamma}\c_l$. Thus, the solution set of $\Ss_{icj}$ is $Y=\{(\bar{\gamma},\bar{\gamma}\c_1)\mid \bar{\gamma}\in\Pi S\}$ and $\Ss_{icj}$ is equivalent to the equation $x\c_1=y$. 

\end{enumerate}   

\bigskip

Now we prove the ``only if'' part of the theorem. Suppose the quasi-identity~(\ref{eq:semigroup_QI1}) does not hold in $S$ (for the formula~(\ref{eq:semigroup_QI2}) the proof is similar). It follows there exist elements $a,b,\al,\beta$ such that $a\al=a\beta=c$, $b\al\neq b\beta$. Let us consider the  system 
\[
\Ss=\{M(\a_n,x,\c_n)\mid n\in \N\},
\]
where 
\[
\a_n=[\underbrace{a,\ldots,a}_{\mbox{$n$ times}},b,b,\ldots],
\c_n=[\underbrace{c,\ldots,c}_{\mbox{$n$ times}},b\al,b\al,\ldots].
\]
One can directly check that the point
\[
\a=[\underbrace{\beta,\ldots,\beta}_{\mbox{$n$ times}},\al,\al,\ldots]
\]
satisfies the first $n$ equations of $\Ss$. However the $(n+1)$-th equation of $\Ss$ gives $\a_{n+1}\a\neq\c_{n+1}$, since its $(n+1)$-th projection defines the equation $bx=b\al$, but $b\al\neq b\beta$. Thus, $\Ss$ is not equivalent to any finite subsystem.
\end{proof}

\begin{corollary}
Let $\Pr(G)$ be the predicatization of a group $G$. Then any direct power of $\Pr(G)$ is $\LL_{g-pred}(\Pi G)$equationally Noetherian.
\label{cor:for_groups}
\end{corollary}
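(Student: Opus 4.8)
The plan is to reduce the richer group language $\LL_{g-pred}$ to the semigroup language $\LL_{s-pred}$ and then quote Theorem~\ref{th:pred_semigroup_N}. First I would check that the multiplicative semigroup of any group $G$ satisfies both quasi-identities~(\ref{eq:semigroup_QI1},\ref{eq:semigroup_QI2}): if $a\alpha=a\beta$, multiplying on the left by $a\1$ gives $\alpha=\beta$, whence $b\al=b\beta$ for every $b$, and~(\ref{eq:semigroup_QI2}) follows symmetrically by right cancellation. Consequently the ``if'' direction of Theorem~\ref{th:pred_semigroup_N} applies to $G$, so every direct power of the \emph{semigroup} predicatization of $G$ is $\LL_{s-pred}$-equationally Noetherian; that is, every $\LL_{s-pred}(\Pi G)$-system is equivalent over $\Pi G$ to a finite subsystem of itself.

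The remaining task is to eliminate the two extra relations $I$ and $E$. The key observation is that over a group each of them is expressible by a single $M$-equation with exactly the same solution set, coordinate by coordinate in $\Pi G$. Since $x=1\Leftrightarrow x^2=x$ holds in every group, the equation $E(u)$ cuts out the same subset of $\Pi G$ as $M(u,u,u)$; and since $x=y\1\Leftrightarrow xy=1$, the equation $I(u,v)$ cuts out the same subset as $M(u,v,\one)$, where $\one$ is the identity of the group $\Pi G$ (the constant sequence of units), available as a constant of $\LL_{g-pred}(\Pi G)$. These replacements apply uniformly whether the arguments $u,v$ are variables or constants.

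Given an arbitrary $\LL_{g-pred}(\Pi G)$-system $\Ss=\{E_j\mid j\in J\}$, I would rewrite each equation as above to obtain an $\LL_{s-pred}(\Pi G)$-system $\Ss'=\{E_j'\mid j\in J\}$ in the same variables, satisfying $\V_{\Pi G}(E_j)=\V_{\Pi G}(E_j')$ for every $j$. Because the solution set of a system is the intersection of the solution sets of its members, this equation-by-equation rewriting preserves the solution set of every subsystem; in particular $\V_{\Pi G}(\Ss)=\V_{\Pi G}(\Ss')$. Applying the first paragraph to $\Ss'$ produces a finite $J_0\subseteq J$ with $\V_{\Pi G}(\Ss')=\V_{\Pi G}(\{E_j'\mid j\in J_0\})$, and pulling this back along the correspondence $E_j\leftrightarrow E_j'$ gives $\V_{\Pi G}(\{E_j\mid j\in J_0\})=\V_{\Pi G}(\{E_j'\mid j\in J_0\})$. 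Hence $\Ss$ is equivalent to its finite subsystem $\{E_j\mid j\in J_0\}$, which is what is required.

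The one point demanding care is that equational Noetherianity asks for a finite subsystem of the \emph{original} $\Ss$, not of the rewritten $\Ss'$; this is precisely why the rewriting is done equation-by-equation with matching solution sets, so that finite subsystems correspond under the bijection. I do not anticipate a genuine obstacle: verifying that $E(u)$ matches $M(u,u,u)$ and that $I(u,v)$ matches $M(u,v,\one)$ reduces coordinatewise to the elementary identities $x^2=x\Leftrightarrow x=1$ and $xy=1\Leftrightarrow x=y\1$. As an alternative avoiding the rewriting, one could split $\Ss$ into its $M$-, $I$- and $E$-parts and reduce each separately, using that $I$ and $E$ are graphs of the inverse map and of the constant $1$ and therefore impose, among finitely many variables, only finitely many essentially distinct constraints, with the inconsistent cases absorbed by Lemma~\ref{l:inconsistent_systems} (applicable because $\Pr(G)$ has a finite relational language and is thus $\LL_{g-pred}$-equationally Noetherian).
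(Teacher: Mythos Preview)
Your argument is correct and shares the paper's core step: groups are cancellative, so the quasi-identities~(\ref{eq:semigroup_QI1},\ref{eq:semigroup_QI2}) hold and Theorem~\ref{th:pred_semigroup_N} disposes of all $M$-equations. Where you diverge is in the treatment of $I$ and $E$. The paper splits a given system into its $M$-, $I$- and $E$-parts and asserts without detail that the latter two are ``directly'' seen to reduce to finite subsystems; this is exactly the alternative you sketch at the end. Your main route instead rewrites $E(u)$ as $M(u,u,u)$ and $I(u,v)$ as $M(u,v,\one)$, turning the whole system into an $\LL_{s-pred}(\Pi G)$-system in one stroke and then invoking Theorem~\ref{th:pred_semigroup_N} once. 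This is a tidy and more explicit reduction than the paper's, and your care in keeping the bijection $E_j\leftrightarrow E_j'$ so that the finite subsystem lives inside the original $\Ss$ is exactly the point that makes the rewriting legitimate. Either route works; yours has the advantage of leaving nothing to ``direct verification.''
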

\begin{proof}
Since the equality $a\al=a\beta$ ($\al a=\beta a$) implies $\al=\beta$ in any group, the quasi-identities~(\ref{eq:semigroup_QI1},\ref{eq:semigroup_QI2}) obviously hold in $G$.
Thus, any infinite system of the form $\{M(\ast,\ast,\ast)\mid i\in I\}$ is equivalent to a finite subsystem.

One can directly prove that for any group infinite systems of the form $\{I(\ast,\ast)\mid i\in I\}$ and $\{E(\ast)\mid i\in I\}$ are also equivalent to their finite subsystems over $\Pi G$.

Thus, any system of $\LL_{g-pred}(\Pi G)$-equations is equivalent over $\Pi G$ to its finite subsystem.
\end{proof}

\begin{remark}
\label{rem:loops}
The last corollary also holds for quasi-groups. Notice that a quasi-group is a non-associative analogue of a group. Any quasi-group admits the analogues of the group divisibility, hence  the quasi-identities~(\ref{eq:semigroup_QI1},\ref{eq:semigroup_QI2}) obviously hold in any quasi-group. Thus, any direct power of a quasi-group $G$ is $\LL_{s-pred}(\Pi G)$-equationally Noetherian (notice here we consider quasi-groups and loops in the language $\LL_{s-pred}$, since not any quasi-group admits the relations $I(x,y)$ and $E(x)$). 
\end{remark}

\bigskip

Below we study finite semigroups $S$ that satisfy Theorem~\ref{th:pred_semigroup_N}. 

A subset $I\subseteq S$ is called a \textit{left (right) ideal} if for any $s\in S$, $a\in I$ it holds $sa\in I$ ($as\in I$). An ideal which is right and left simultaneously is said to be {\it two-sided} (or an {\it ideal} for shortness). 

A semigroup $S$ with a unique ideal $I=S$ is called \textit{simple}. Let us remind the classical Sushkevich-Rees theorem for finite simple semigroups.  

\begin{theorem}
\label{th:sushkevic_rees}
For any finite simple semigroup $S$ there exists a finite group $G$ and finite sets $I,\Lambda$ such that  $S$ is isomorphic to the set of triples $(\lambda,g,i)$, $g\in G$, $\lambda\in\Lambda$, $i\in I$. The multiplication over the triples $(\lambda,g,i)$ is defined by
\[
(\lambda,g,i)(\mu,h,j)=(\lambda,gp_{i\mu}h,j),
\]
where $p_{i\mu}\in G$ is an element of a matrix $\P$ such that
\begin{enumerate}
\item $\P$ consists of  $|I|$ rows and $|\Lambda|$ columns;
\item the elements of the first row and the first column equal  $1\in G$ (i.e. $\P$ is {\it normalized}).
\end{enumerate}
\end{theorem}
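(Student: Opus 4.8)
The plan is to derive the theorem from the theory of Green's relations and the ``egg-box'' decomposition of a $\mathcal{J}$-class. Recall that on a semigroup one sets $a\,\mathcal{R}\,b \Leftrightarrow aS^1=bS^1$, $a\,\mathcal{L}\,b \Leftrightarrow S^1a=S^1b$, $\mathcal{H}=\mathcal{R}\cap\mathcal{L}$, and $\mathcal{D}=\mathcal{R}\circ\mathcal{L}=\mathcal{L}\circ\mathcal{R}$. In a finite semigroup one has $\mathcal{D}=\mathcal{J}$, and $S$ is simple exactly when it is a single $\mathcal{J}$-class. So the first step is to note that a finite simple $S$ is a single $\mathcal{D}$-class, which I would display as an egg-box: the $\mathcal{R}$-classes are the rows, indexed by $\Lambda$; the $\mathcal{L}$-classes are the columns, indexed by $I$; and each cell $H_{\lambda i}=R_\lambda\cap L_i$ is an $\mathcal{H}$-class.

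The decisive structural step is to show that $S$ is \emph{completely} simple, i.e. that every $\mathcal{H}$-class contains an idempotent. Here finiteness is indispensable: it guarantees that $S$ possesses minimal left and minimal right ideals, and a finite simple semigroup with a minimal left ideal is completely simple. Granting this, the Clifford--Miller theorem shows that each cell $H_{\lambda i}$ is a group, that all cells are mutually isomorphic, and in particular that a fixed reference cell $G:=H_{11}=R_1\cap L_1$, whose identity is an idempotent $e$, is a group isomorphic to every other $H_{\lambda i}$.

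With the group structure in hand I would coordinatize. Choose, for each row $\lambda\in\Lambda$, an element $b_\lambda\in R_\lambda\cap L_1$, and for each column $i\in I$ an element $a_i\in R_1\cap L_i$, normalized so that $a_1=b_1=e$. Green's lemma shows that the map $(\lambda,g,i)\mapsto b_\lambda\,g\,a_i$ is a bijection of $\Lambda\times G\times I$ onto $S$, carrying $(\lambda,g,i)$ into the cell $H_{\lambda i}$. Computing a product yields $(b_\lambda g a_i)(b_\mu h a_j)=b_\lambda\,g\,(a_ib_\mu)\,h\,a_j$, and since $a_i\in R_1\cap L_i$ and $b_\mu\in R_\mu\cap L_1$ the ``sandwich'' factor $a_ib_\mu$ lands in $R_1\cap L_1=G$; writing $p_{i\mu}:=a_ib_\mu\in G$ gives exactly the Rees product $(\lambda,g,i)(\mu,h,j)=(\lambda,\,g\,p_{i\mu}\,h,\,j)$. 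Finally I would normalize $\P$: rechoosing the first-row and first-column representatives $b_\lambda,a_i$ by multiplying them with suitable elements of $G$ forces $p_{1\mu}=p_{i1}=1$, so that $\P$ is normalized and has $|I|$ rows and $|\Lambda|$ columns as claimed.

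The main obstacle I anticipate is the complete-simplicity step of the second paragraph: proving that every $\mathcal{H}$-class of the single $\mathcal{D}$-class carries an idempotent, equivalently that $S$ contains no bicyclic subsemigroup. This is precisely where finiteness enters, and it is the only genuinely nontrivial input; once it is available, both the group structure of the cells and the verification of the product formula and of the normalization are routine consequences of Green's lemma.
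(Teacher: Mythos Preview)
Your outline is the standard Green's-relations proof of the Rees--Suschkewitsch theorem and is correct; however, the paper does not actually prove this statement. It is introduced with ``Let us remind the classical Sushkevich--Rees theorem for finite simple semigroups'' and is quoted as background, with no argument supplied. So there is nothing in the paper to compare your proposal against: the author simply invokes the result and moves on to use the $(\lambda,g,i)$-coordinates in Lemma~\ref{l:Noeth_implies_homogroup_or_band}.
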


Following Theorem~\ref{th:sushkevic_rees}, we denote any finite simple semigroup $S$ by $S=(G,\P,\Lambda,I)$. 

%
%

The minimal ideal of a semigroup $S$ is called a \textit{kernel} and denoted by $Ker(S)$ (any finite semigroup always has a unique kernel). Obviously, if $S=Ker(S)$ the semigroup is simple. If $Ker(S)$ is a group then $S$ is said to be a \textit{homogroup}. The next theorem contains the necessary information about homogroups.

\begin{theorem}\textup{\cite{lyapin}}
\label{th:homogroups_properties}
In a homogroup $S$ the identity element $e$ of the kernel $Ker(S)$ is idempotent ($e^2=e$) and belongs to the center of $S$ (i.e. $e$ commutes with any $s\in S$).
\end{theorem}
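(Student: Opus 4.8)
The plan is to prove the two assertions separately, starting with the trivial one. The idempotency $e^2=e$ is immediate from the hypothesis: by definition $e$ is the identity element of the group $Ker(S)$, and the identity of any group is idempotent, so $e\cdot e=e$.

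The centrality claim is the substantive part, and the plan is to exploit that $Ker(S)$ is a \emph{two-sided} ideal together with the fact that $e$ is a genuine group identity inside $Ker(S)$. First I would fix an arbitrary $s\in S$ and record that both products $es$ and $se$ lie in $Ker(S)$: indeed $e\in Ker(S)$, and since $Ker(S)$ is a left ideal the product $es$ stays inside it, while since $Ker(S)$ is a right ideal the product $se$ does too. This preliminary observation is what makes the identity property usable, because $ek=ke=k$ only for elements $k$ of the group $Ker(S)$, not for arbitrary elements of $S$.

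The key step is to compute the triple product $ese$ in two different ways via associativity. On one hand $ese=(es)e$; since $es\in Ker(S)$ and $e$ is the identity of that group, right multiplication by $e$ fixes $es$, giving $(es)e=es$. On the other hand $ese=e(se)$; since $se\in Ker(S)$, left multiplication by $e$ fixes $se$, giving $e(se)=se$. Comparing the two evaluations yields $es=ese=se$. As $s$ was arbitrary, $e$ commutes with every element of $S$, so $e$ lies in the center of $S$.

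I do not anticipate a real obstacle here; the argument is the standard one for a minimal ideal that happens to be a group. The only point demanding care is the one flagged above, namely to verify that $es$ and $se$ belong to $Ker(S)$ \emph{before} invoking the identity law for $e$, since that law has no force on elements lying outside the kernel.
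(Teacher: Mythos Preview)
Your argument is correct and is the standard proof of this classical fact. Note, however, that the paper does not supply its own proof of this theorem: it simply quotes the result from Lyapin's monograph \cite{lyapin}, so there is no in-paper argument to compare against.

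One small cosmetic point: relative to the paper's conventions (a left ideal satisfies $sa\in I$ and a right ideal satisfies $as\in I$ for $s\in S$, $a\in I$), your attributions are swapped --- with $e\in Ker(S)$ and $s\in S$, it is the \emph{right}-ideal property that gives $es\in Ker(S)$ and the \emph{left}-ideal property that gives $se\in Ker(S)$. Since $Ker(S)$ is two-sided this has no effect on the argument, but you may want to align the labels with the paper's definitions.
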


\medskip

A semigroup $S$ is called a {\it rectangular band of groups} if $S=(G,\P,\Lambda,I)$ and $p_{i\lambda}=1$ for any $i\in I$, $\lambda\in \Lambda$.

\begin{lemma}
Suppose a finite simple semigroup $S$ satisfies~(\ref{eq:semigroup_QI1},\ref{eq:semigroup_QI2}). Then $S$ is a rectangular band of groups. 
\label{l:Noeth_implies_homogroup_or_band}
\end{lemma}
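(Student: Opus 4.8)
The plan is to pass to the Rees representation $S=(G,\P,\Lambda,I)$ supplied by Theorem~\ref{th:sushkevic_rees}, translate the quasi-identities~(\ref{eq:semigroup_QI1}) and~(\ref{eq:semigroup_QI2}) into purely group-theoretic constraints on the entries $p_{i\lambda}$ of the sandwich matrix $\P$, and then use the normalization of $\P$ (first row and first column equal to $1$) to force every entry to be $1$, which is exactly the definition of a rectangular band of groups.

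First I would work out when two elements are left-indistinguishable. Writing $\alpha=(\lambda_1,g_1,i_1)$, $\beta=(\lambda_2,g_2,i_2)$ and $a=(\mu,h,k)$, the multiplication rule $(\lambda,g,i)(\mu,h,j)=(\lambda,gp_{i\mu}h,j)$ gives $a\alpha=(\mu,hp_{k\lambda_1}g_1,i_1)$ and $a\beta=(\mu,hp_{k\lambda_2}g_2,i_2)$; hence $a\alpha=a\beta$ holds iff $i_1=i_2$ and $p_{k\lambda_1}g_1=p_{k\lambda_2}g_2$, the factor $h$ and the common outer coordinate $\mu$ cancelling. Thus the hypothesis of~(\ref{eq:semigroup_QI1}) is a genuine constraint on the columns $\lambda_1,\lambda_2$ and on the row $k$ inherited from $a$.

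Next I would feed this into~(\ref{eq:semigroup_QI1}). For $b=(\nu,h',k')$ the conclusion $b\alpha=b\beta$ reads $p_{k'\lambda_1}g_1=p_{k'\lambda_2}g_2$. For any prescribed $k,\lambda_1,\lambda_2$ the hypothesis is realisable by choosing $g_1$ freely and $g_2=p_{k\lambda_2}^{-1}p_{k\lambda_1}g_1$; substituting this into the conclusion and cancelling $g_1$ in the group $G$ yields
\[
p_{k'\lambda_1}p_{k\lambda_1}^{-1}=p_{k'\lambda_2}p_{k\lambda_2}^{-1}\qquad\text{for all }k,k'\in I,\ \lambda_1,\lambda_2\in\Lambda,
\]
that is, $p_{k'\lambda}p_{k\lambda}^{-1}$ does not depend on the column $\lambda$. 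Taking $k$ to index the first row, where $p_{k\lambda}=1$ by normalization, this says precisely that every row of $\P$ is constant. Then the second half of the normalization, namely that the first column consists of $1$'s, forces each constant row to be identically $1$, so $p_{i\lambda}=1$ for all $i\in I$, $\lambda\in\Lambda$, and $S$ is a rectangular band of groups. The dual quasi-identity~(\ref{eq:semigroup_QI2}) gives, by the symmetric computation, that every column of $\P$ is constant, so either identity alone suffices once the normalization is invoked; I would record this symmetry rather than repeat the calculation.

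I expect the only delicate step to be the bookkeeping in the translation of Step~2: one must be careful that the outer coordinates $\lambda$ and $i$ match and that the free group factor $h$ cancels, so that a clean identity among the $p_{i\lambda}$ is extracted. Once that transcription is correct, the remainder is routine cancellation in $G$ together with the normalization of $\P$.
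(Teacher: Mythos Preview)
Your argument is correct. Both you and the paper begin from the Rees representation $S=(G,\P,\Lambda,I)$, but then diverge. The paper argues by contradiction with a single concrete witness: assuming some $p_{i\lambda}\neq 1$, it exhibits $a=(1,1,1)$, $\alpha=(\lambda,1,1)$, $\beta=(1,1,1)$, $b=(1,1,i)$ and checks directly that $a\alpha=a\beta$ while $b\alpha\neq b\beta$, violating~(\ref{eq:semigroup_QI1}). You instead translate~(\ref{eq:semigroup_QI1}) into the identity $p_{k'\lambda_1}p_{k\lambda_1}^{-1}=p_{k'\lambda_2}p_{k\lambda_2}^{-1}$ for all indices, and then read off from normalization that every entry of $\P$ equals $1$. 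The paper's route is shorter and more hands-on; yours is more structural, makes transparent exactly what constraint the quasi-identity imposes on the sandwich matrix, and cleanly isolates the observation that either of~(\ref{eq:semigroup_QI1}),~(\ref{eq:semigroup_QI2}) alone already forces $\P$ to be trivial.
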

\begin{proof}
By Theorem~\ref{th:sushkevic_rees}, $S=(G,\P,\Lambda,I)$ for some finite group $G$, matrix $\P$ and finite sets of indexes $\Lambda,I$. 

Assume that $|\Lambda|>1$ and $p_{i\lambda}\neq 1$ for some $i,\lambda$.

Let $a=(1,1,1)$, $\al=(\lambda,1,1)$, $\beta=(1,1,1)$ and hence 
\begin{equation}
a\al=(1,1,1)(\lambda,1,1)=(1,1,1)=(1,1,1)(1,1,1)=a\beta.
\label{eq:1111111}
\end{equation}
However, for $b=(1,1,i)$ we have
\begin{eqnarray}
b\al=(1,1,i)(\lambda,1,1)=(1,p_{i\lambda},1)\neq (1,1,1)=(1,1,i)(1,1,1)=b\beta.
\label{eq:22222222}
\end{eqnarray}
We obtain that the equalities~(\ref{eq:1111111},\ref{eq:22222222}) contradict~(\ref{eq:semigroup_QI1},\ref{eq:semigroup_QI2}).

Thus, either $p_{i\lambda}=1$ for all $i,\lambda$ or $|\Lambda|=|I|=1$. In any case $S$ is a rectangular band of groups.
\end{proof}

An element $s$ of a semigroup $S$ is called {\it reducible} if there exist $a,b\in S$ with $s=ab$. Clearly, the set of all reducible elements $Red(S)$ is an ideal of a semigroup $S$.

\begin{lemma}
Let $S$ be a finite semigroup satisfying~(\ref{eq:semigroup_QI1},\ref{eq:semigroup_QI2}). Then $Ker(S)$ is the set of all reducible elements. 
\label{l:reducible_elements_rect_band}
\end{lemma}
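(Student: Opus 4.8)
The plan is to identify the reducible elements with $S^2$ and then to sandwich this set against $Ker(S)$. Since $s\in Red(S)$ means $s=ab$ for some $a,b\in S$, we have $Red(S)=S^2$, and both $Red(S)$ and $Ker(S)$ are ideals of $S$. The inclusion $Ker(S)\subseteq Red(S)$ needs no hypothesis: $Ker(S)$ is the minimal ideal, hence a simple semigroup, so the nonempty ideal $Ker(S)^2$ of $Ker(S)$ must equal $Ker(S)$; thus every kernel element is already a product of two (kernel) elements and so is reducible. It therefore remains to prove $S^2\subseteq Ker(S)$, that is, that every product $ab$ lands in the kernel. This is the only place where the quasi-identities~(\ref{eq:semigroup_QI1},\ref{eq:semigroup_QI2}) are used.

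For $S^2\subseteq Ker(S)$ I would fix an idempotent $e\in Ker(S)$; such an element exists because the finite semigroup $Ker(S)$ contains an idempotent (indeed, by Lemma~\ref{l:Noeth_implies_homogroup_or_band} the kernel is a rectangular band of groups, which has idempotents). The key observation is that idempotency of $e$ produces, for free, a valid premise for~(\ref{eq:semigroup_QI1}): for every $\gamma\in S$ we have $e\gamma=e(e\gamma)$, which is exactly $a\alpha=a\beta$ with $a=e$, $\alpha=\gamma$, $\beta=e\gamma$. Applying~(\ref{eq:semigroup_QI1}) then yields $b\gamma=b(e\gamma)$ for every $b\in S$. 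Since $e\in Ker(S)$ and $Ker(S)$ is a two-sided ideal, we get $e\gamma\in Ker(S)$ and hence $b(e\gamma)\in Ker(S)$; therefore $b\gamma=b(e\gamma)\in Ker(S)$ for all $b,\gamma\in S$, which is precisely $S^2\subseteq Ker(S)$. A symmetric computation using~(\ref{eq:semigroup_QI2}) together with $\gamma e=(\gamma e)e$ gives the same conclusion through right multiplication.

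The main conceptual obstacle is recognizing how to trigger the quasi-identity at all: on its own~(\ref{eq:semigroup_QI1}) only \emph{transports} a collapse of one left translation to all the others, so one must first manufacture a genuine collapse, and the idempotent relation $e\gamma=e(e\gamma)$ supplies exactly such a collapse at no cost. Once this instance is in hand the remainder is routine: the quasi-identity propagates the collapse to every left translation, and choosing $e$ inside the kernel (so that $e\gamma$ is already a kernel element) lets the ideal property of $Ker(S)$ absorb the product. I expect no difficulty beyond locating this instantiation; in fact only one of the two quasi-identities and only the existence of a single kernel idempotent are needed, so Lemma~\ref{l:Noeth_implies_homogroup_or_band} is invoked here merely to guarantee that idempotent.
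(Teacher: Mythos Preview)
Your argument is correct and follows the same line as the paper's: manufacture an equality $c\alpha=c\beta$ with $\beta\in Ker(S)$, invoke~(\ref{eq:semigroup_QI1}) to transport it to $b\alpha=b\beta$ for all $b$, and then use that $Ker(S)$ is an ideal. The only difference is cosmetic: the paper produces the triggering equality via the Rees coordinates, writing $(\lambda,g,i)b=(\lambda,g,i)\bigl((1,1,i)b\bigr)$, whereas you use an idempotent $e\in Ker(S)$ and $e\gamma=e(e\gamma)$; since $(1,1,i)$ is itself such an idempotent, your formulation is a mild streamlining that avoids the explicit Rees matrix description (and, as you note, does not actually need Lemma~\ref{l:Noeth_implies_homogroup_or_band}, since every finite semigroup contains an idempotent). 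You are also more explicit than the paper about the easy inclusion $Ker(S)\subseteq Red(S)$.
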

\begin{proof}
Let $b\in S$. We have $(\lambda,g,i)b=(\lambda,g,i)(1,1,i)b=(\lambda,g,i)r$, where $r=(1,1,i)b\in Ker(S)$. By~(\ref{eq:semigroup_QI1}), we obtain $ab=ar$ for any $a\in S$. Since $ar\in Ker(S)$, so is $ab$. Thus, any product of elements belongs to $Ker(S)$. Thus, $Red(S)=Ker(S)$.
\end{proof}

\begin{theorem}
If a direct power $\Pi Pr(S)$ of a finite semigroup $S$ is equationally Noetherian, then $Ker(S)=Red(S)$ and $Ker(S)$ is a rectangular band of groups. 
\label{th:semigroup_description}
\end{theorem}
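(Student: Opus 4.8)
The plan is to assemble the statement directly from the three results already established, namely Theorem~\ref{th:pred_semigroup_N}, Lemma~\ref{l:reducible_elements_rect_band} and Lemma~\ref{l:Noeth_implies_homogroup_or_band}. Since the direct power $\Pi\Pr(S)$ is equationally Noetherian, Theorem~\ref{th:pred_semigroup_N} immediately gives that $S$ satisfies the two quasi-identities~(\ref{eq:semigroup_QI1},\ref{eq:semigroup_QI2}). This is the only place where the Noetherian hypothesis enters; the rest of the argument is a purely structural consequence of these quasi-identities for a finite semigroup, so the theorem reduces to reading off the conclusions of the earlier lemmas.

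The equality $Ker(S)=Red(S)$ is then exactly Lemma~\ref{l:reducible_elements_rect_band}, which applies because $S$ is finite and satisfies~(\ref{eq:semigroup_QI1},\ref{eq:semigroup_QI2}). For the second half, I would use that $Ker(S)$, being the minimal two-sided ideal of a finite semigroup, is itself a finite \emph{simple} semigroup, so that the Rees--Sushkevich description (Theorem~\ref{th:sushkevic_rees}) applies to it and Lemma~\ref{l:Noeth_implies_homogroup_or_band} becomes available. To invoke that lemma I must first check that $Ker(S)$, regarded as a semigroup in its own right, still satisfies~(\ref{eq:semigroup_QI1},\ref{eq:semigroup_QI2}). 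Here I would note that these formulas are universal Horn sentences, hence inherited by every subsemigroup: all the quantified witnesses $a,b,\alpha,\beta$ and the conclusion of each implication live inside $Ker(S)$, so an implication valid for all elements of $S$ holds in particular for all elements of the subsemigroup $Ker(S)\subseteq S$. Granting this, $Ker(S)$ is a finite simple semigroup satisfying~(\ref{eq:semigroup_QI1},\ref{eq:semigroup_QI2}), and Lemma~\ref{l:Noeth_implies_homogroup_or_band} yields that $Ker(S)$ is a rectangular band of groups.

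The main, and essentially the only, subtlety is the descent of the quasi-identities to the subsemigroup $Ker(S)$, combined with the classical fact that the kernel of a finite semigroup is simple; once both are in hand, the theorem is just the concatenation of Theorem~\ref{th:pred_semigroup_N}, Lemma~\ref{l:reducible_elements_rect_band} and Lemma~\ref{l:Noeth_implies_homogroup_or_band}.
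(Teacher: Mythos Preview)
Your proposal is correct and follows essentially the same route as the paper, which simply states that the result ``immediately follows from Lemmas~\ref{l:Noeth_implies_homogroup_or_band} and~\ref{l:reducible_elements_rect_band}'' (with Theorem~\ref{th:pred_semigroup_N} implicitly supplying the quasi-identities). In fact you are more careful than the paper on one point: Lemma~\ref{l:Noeth_implies_homogroup_or_band} is stated for a finite \emph{simple} semigroup, so it must be applied to $Ker(S)$ rather than to $S$ itself, and you correctly justify this by noting that the quasi-identities, being universal Horn sentences, pass to the subsemigroup $Ker(S)$ and that the kernel of a finite semigroup is simple.
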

\begin{proof}
The proof immediately follows from Lemmas~\ref{l:Noeth_implies_homogroup_or_band},~\ref{l:reducible_elements_rect_band}.
\end{proof}
%
%
%

However, homogroups satisfy the converse statement of Theorem~\ref{th:semigroup_description}.  

\begin{theorem}
If $Ker(S)=Red(S)$ for a homogroup $S$, then the direct power $\Pi S$ is $\LL_{s-pred}(\Pi S)$-equationally Noetherian. 
\label{th:converse_theorem_for_homogroups}
\end{theorem}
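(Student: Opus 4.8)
The plan is to reduce the statement to the quasi-identity criterion already established in Theorem~\ref{th:pred_semigroup_N}. That theorem asserts that every direct power of $\Pr(S)$ is equationally Noetherian precisely when the quasi-identities~(\ref{eq:semigroup_QI1},\ref{eq:semigroup_QI2}) hold in $S$. Hence it suffices to verify that a homogroup $S$ with $Ker(S)=Red(S)$ satisfies both~(\ref{eq:semigroup_QI1}) and~(\ref{eq:semigroup_QI2}); the conclusion about $\Pi S$ then follows immediately.

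First I would fix the central idempotent $e$, the identity element of the kernel $K=Ker(S)$, whose idempotency $e^2=e$ and centrality are guaranteed by Theorem~\ref{th:homogroups_properties}. The structural fact I want to establish, and the heart of the argument, is the factorization
\[
ab=(ea)(eb)\quad\text{for all }a,b\in S.
\]
The point is that every product $ab$ is reducible, hence lies in $K$ by the hypothesis $Red(S)=Ker(S)$, so $e(ab)=ab$ because $e$ is the identity of $K$; combining this with $e^2=e$ and the centrality of $e$ gives the chain $(ea)(eb)=e\,a\,e\,b=e\,(ae)\,b=e\,(ea)\,b=e^2ab=eab=ab$. This identity expresses the multiplication of $S$ entirely through the group $K$, where full cancellation is available, and note that $ea$, $e\al$, $e\beta$ all lie in $K$ since $K$ is an ideal containing $e$.

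With the factorization in hand, checking~(\ref{eq:semigroup_QI1}) is routine. Suppose $a\al=a\beta$; rewriting both sides as $(ea)(e\al)=(ea)(e\beta)$ inside $K$ and cancelling $ea$ on the left yields $e\al=e\beta$. Then for any $b$ one gets $b\al=(eb)(e\al)=(eb)(e\beta)=b\beta$, which is exactly~(\ref{eq:semigroup_QI1}). The quasi-identity~(\ref{eq:semigroup_QI2}) follows by the symmetric argument, cancelling $ea$ on the right. Invoking Theorem~\ref{th:pred_semigroup_N} then finishes the proof.

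The main obstacle, and the only place where both hypotheses genuinely enter, is the factorization step: the centrality and idempotency of $e$ (supplied by the homogroup assumption through Theorem~\ref{th:homogroups_properties}) are what let me move $e$ freely past the other factors and collapse $e^2$ to $e$, while the assumption $Red(S)=Red(S)=Ker(S)$ is what forces every product into the group $K$ so that $e$ behaves as a two-sided identity there. I would verify the reordering in the displayed chain with care, since a careless permutation of the factors $e,a,e,b$ is the likeliest source of error; everything after the factorization is just group cancellation.
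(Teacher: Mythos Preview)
Your proof is correct and follows essentially the same route as the paper: both arguments reduce to verifying the quasi-identities~(\ref{eq:semigroup_QI1},\ref{eq:semigroup_QI2}) by using the centrality and idempotency of $e$ together with group cancellation in $Ker(S)$ to pass from $a\al=a\beta$ to $e\al=e\beta$ and thence to $b\al=b\beta$. Your factorization identity $ab=(ea)(eb)$ is a slightly tidier packaging of the same computation the paper carries out line by line.
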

\begin{proof}
Let us take $a,b,\al,\beta$ such that $a\al=a\beta$, and $e$ be the identity of $Ker(S)$. We have
\begin{eqnarray*}
a\al&=&a\beta\mid \cdot e\\
ea\al&=&ea\beta\\
(ea)\al&=&(ea)\beta\mid \cdot (ea)^{-1}\mbox{ since $ea$ belongs to the group $Ker(S)$}\\
e\al&=&e\beta\mid \mbox{$e$ is a central element}\\
\al e&=&\beta e.
\end{eqnarray*}

We have (below we use $b\beta\in Ker(S)=Red(S)$):
\begin{equation*}
b\al=(b\al)e=b(\al e)=b(\beta e)=(b\beta)e=b\beta
\end{equation*}
Thus, the quasi-identity~(\ref{eq:semigroup_QI1}) holds for $S$. The proof for the quasi-identity~(\ref{eq:semigroup_QI2}) is similar. 
\end{proof}

One can directly to check that for a rectangular band of groups  $S=(G,\P,\Lambda,I)$ the converse statement of Theorem~\ref{th:semigroup_description} also holds. 

Thus, one can formulate the following conjecture.

\medskip

\noindent {\bf Conjecture.} If a finite semigroup $S$ has a  rectangular band of groups $Ker(S)=Red(S)$ then $S$ satisfies the quasi-identities~(\ref{eq:semigroup_QI1},\ref{eq:semigroup_QI2}).

\bigskip

The information of the author:

Artem N. Shevlyakov

Sobolev Institute of Mathematics

644099 Russia, Omsk, Pevtsova st. 13

\medskip

Omsk State Technical University

pr. Mira, 11, 644050

Phone: +7-3812-23-25-51.

e-mail: \texttt{a\_shevl@mail.ru}

\end{document}